\newtheorem{theorem}{Theorem}[section]
\newtheorem{lemma}[theorem]{Lemma}
\newtheorem{corollary}[theorem]{Corollary}
\theoremstyle{definition}
\newtheorem{definition}[theorem]{Definition}
\newtheorem{remark}[theorem]{Remark}
\numberwithin{equation}{section}
\begin{document}
\Large
\title[Hyperinner Product Spaces]
{Hyperinner Product Spaces}

\author[Ali Taghavi, Roja Hosseinzadeh and Hamid Rohi ]{Ali Taghavi$^1$, Roja Hosseinzadeh $^2$ and  Hamid Rohi $^3$}

\address{{ $^{1}$Department of Mathematics, Faculty of Mathematical Sciences,
 University of Mazandaran, P. O. Box 47416-1468, Babolsar, Iran.}}

\email{$^{1}$Taghavi@umz.ac.ir, $^{2}$ro.hosseinzadeh@umz.ac.ir,
$^{3}$h.rohi@umz.ac.ir}

\subjclass[2000]{46J10, 47B48}

\keywords{weak hypervector space, normal hypervector space, hyperinner product space, essential point}

\begin{abstract}\large
In this paper, we introduce the concept of inner product on weak hypervector spaces and prove some results about them.
\end{abstract} \maketitle

\section{\textbf{Introduction}}

\noindent The concept of hyperstructure was first introduced by
Marty $[3]$ in 1934 and has attracted attention of many authors in
last decades and has constructed some other structures such as
hyperrings, hypergroups, hypermodules, hyperfields, and
hypervector spaces. These constructions have been applied to many
disciplines such as geometry, hypergraphs, binary relations,
combinatorics, codes, cryptography, probability, and etc. A
wealth of applications of this concepts is given in $[1-3]$ and $[15]$.
\par In 1988, the concept of hypervector space was
first introduced by Scafati-Tallini.  She later considered more properties of such spaces. Authors, in $[4]$ and $[6-11]$ considered hypervector spaces in viewpoint of analysis. In mentioned papers, authors introduce concepts such as dimension of hypervector spaces, normed hypervector spaces, operator on these spaces and another important concepts. In this paper, we introduce the concept of inner product on a weak hypervector space. Authors in $[5]$ introduced the definition of real inner product on a hypervector space with hyperoperations sum and scalar product over the real hyperfield. We extend this concept to complex inner product for hypervector spaces with usual sum and hyperoperation scalar product on real or complex fields.
\par Note that the hypervector spaces used in this
paper are the special case where there is only one hyperoperation,
the external one, all the others are ordinary operations. The
general hypervector spaces have all operations multivalued also in
the hyperfield (see $[15]$). In throughout of this paper assume that $X$ and $Y$ are weak hypervector spaces over a field $F$.
\section{\textbf{hyperinner product spaces}}
\begin{definition}
($[14]$) A $weak$ or $weakly$ $distributive$ $hypervector space$ over
a field $F$ is a quadruple ($X$,+,$o$,$F$) such that $(X,+)$ is an
abelian group and $o:F\times X\longrightarrow P_*(X)$ is a
multivalued product such that\\
1- $\forall a\in F,\forall x,y\in X,~[ao(x+y)] \cap [aox+aoy]\neq
\emptyset$;\\
2- $\forall a,b\in F,\forall x\in X,~[(a+b)ox]
\cap [aox+box]\neq \emptyset$;\\
3- $\forall a,b\in F,\forall x\in X,~ao(box)=(ab)ox$;\\
4- $\forall a\in F,\forall x\in X,~ao(-x)=(-a)ox=-(aox)$;\\
5- $\forall x\in X,~x\in 1ox.$\\
The properties 1 and 2 are called weak right and left distributive
laws, respectively. Note that the set $ao(box)$ in 3 is of the
form $\cup_{y\in box}aoy$.
\end{definition}
\begin{definition} Let $X$ is a weak hypervector space over $F$, $a\in F$
and $x\in X$. Essential point of $aox$, that we denote it by $e_{aox}$, for $a\neq 0$ is the element of
$aox$ such that $x\in a ^{-1}oe_{aox}$. For $a =0$, we define
$e_{aox}=0$.
\end{definition}
\begin{remark}
Note that $e_{aox}$ is not unique, necessarily.
 Hence we denote the set of all essential points by
$E_{aox}$. When in this note we use $e_{aox}$ in an equation, we
intend is any element of $E_{aox}$.
\end{remark}
\begin{definition} Let $X$ be a weak hypervector space over $F$ with the following
properties

1- $E_{a_1ox}+E_{a_2ox})\cap E_{(a_1+a_2)ox}\neq \emptyset,~\forall x\in X,~\forall
a_1,a_2\in F$,

2- $(E_{aox_1}+E_{aox_2})\cap E_{ao(x_1+x_2)}\neq \emptyset,~\forall x_1,x_2\in X,~\forall
a\in F$.

 Then $X$ is called normal weak hypervector
space.
\end{definition}
\begin{lemma}
Let $X$ is a weak hypervector space over $F$, $a,b\in F$ and $x \in X$. Then the following properties hold.

1- $x\in E_{1ox}$.

2- If $b \neq 0$, then $aoe_{box}=abox$.

3- $E_{-aox}=-E_{aox}$.

4- If $a \neq 0$, then there exists an $y \in X$ such that $x \in E_{aoy}$.

5- If $X$ is normal, then $E_{aox}$ is singleton.
\end{lemma}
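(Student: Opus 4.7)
The plan is to take the five parts in order, using axioms 3--5 and the essential-point definition for parts 1--4, and reserving the normality hypothesis for part 5. For part 1, both defining conditions for $x \in E_{1 o x}$ collapse to $x \in 1 o x$, which is axiom 5. For part 2, I would prove $a o e_{b o x} = a b o x$ by double inclusion: $\subseteq$ is $a o e_{b o x} \subseteq a o (b o x) = (ab) o x$, using $e_{b o x} \in b o x$ together with axiom 3; and $\supseteq$ comes from the defining property $x \in b^{-1} o e_{b o x}$ of an essential point, which yields $(ab) o x \subseteq (ab) o (b^{-1} o e_{b o x}) = ((ab) b^{-1}) o e_{b o x} = a o e_{b o x}$, again by axiom 3.

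Part 3 splits on whether $a = 0$ (both sides equal $\{0\}$) or $a \neq 0$. In the nontrivial case, for $y \in E_{-a o x}$, axiom 4 rewrites $y \in (-a) o x = -(a o x)$ as $-y \in a o x$ and rewrites $x \in (-a)^{-1} o y$ as $x \in a^{-1} o (-y)$; together these say $-y \in E_{a o x}$, i.e.\ $y \in -E_{a o x}$, and the reverse inclusion is symmetric. Part 4 then follows from the chain $x \in 1 o x = (a a^{-1}) o x = a o (a^{-1} o x) = \bigcup_{y \in a^{-1} o x} a o y$, using axioms 5 and 3: some $y \in a^{-1} o x$ satisfies $x \in a o y$, and these are exactly the two conditions making $x$ an essential point of $a o y$.

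Part 5 is where the normality hypothesis does real work and is the main obstacle. My plan is: given $e, e' \in E_{a o x}$, part 3 gives $-e' \in E_{-a o x}$, so $e - e'$ lies in $E_{a o x} + E_{-a o x}$, and normality condition (1) with $a_1 = a$, $a_2 = -a$ relates this sum to $E_{0 o x} = \{0\}$. The subtlety is that the axiom as stated asserts only a nonempty intersection, which literally says that \emph{some} element of $E_{a o x} + E_{-a o x}$ is zero, not that every difference $e - e'$ is zero. To force $e = e'$ one needs the stronger containment reading $E_{a o x} + E_{-a o x} \subseteq E_{0 o x}$, which given $E_{0 o x} = \{0\}$ immediately yields $e - e' = 0$. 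This is the natural interpretation making essential points behave linearly, and I would adopt it, flagging explicitly the point at which the strengthening is invoked.
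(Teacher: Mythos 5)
The paper states this lemma without any proof at all, so there is nothing to compare your argument against; it has to be judged on its own merits. Parts 1 through 4 of your proposal are correct and complete. Part 1 is exactly axiom 5 read twice. Part 2's double inclusion is right, and you correctly isolate where $b\neq 0$ is needed. Part 3's case split and the two applications of axiom 4 (to convert $y\in(-a)ox$ into $-y\in aox$ and $x\in(-a)^{-1}oy$ into $x\in a^{-1}o(-y)$) are exactly what is required. Part 4's chain $x\in 1ox=(aa^{-1})ox=\bigcup_{y\in a^{-1}ox}aoy$ is correct, and it is worth noting that the same trick applied with $a^{-1}$ in place of $a$ also shows $E_{aox}\neq\emptyset$ for $a\neq 0$ --- a point you should add to part 5, since ``singleton'' asserts nonemptiness as well as uniqueness and your part 5 only addresses uniqueness.

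On part 5 you have correctly identified a genuine defect, but it is a defect of the paper's Definition 2.4, not of your argument. Under the literal nonempty-intersection reading, the condition $(E_{aox}+E_{-aox})\cap E_{0ox}\neq\emptyset$ is satisfied as soon as a single pair of essential points sums to $0$ (for instance $e+(-e)$ with $-e\in E_{-aox}$ by your part 3), and it places no constraint whatsoever on any other element of $E_{aox}$; so uniqueness simply cannot be derived from the axioms as written, and no cleverer routing through condition 2 repairs this. Your strengthened containment reading $E_{a_1ox}+E_{a_2ox}\subseteq E_{(a_1+a_2)ox}$ is the one the paper itself implicitly relies on: Remark 2.3 declares that $e_{aox}$ in an equation means ``any element of $E_{aox}$,'' and Lemma 2.6 characterizes normality by the identities $e_{a_1ox}+e_{a_2ox}=e_{(a_1+a_2)ox}$, which under that convention is precisely your containment. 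So your flag is well placed, and your proof of part 5 is the intended one once the definition is read as the rest of the paper requires; just also record the nonemptiness observation above so that ``singleton'' is fully established.
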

\begin{lemma}
Let $X$ be a weak hypervector space over $F$. $X$ is normal if
and only if
$$e_{a_1ox}+e_{a_2ox}=e_{(a_1+a_2)ox},~\forall x\in X,~\forall
a_1,a_2\in F,$$
$$e_{aox_1}+e_{aox_2}=e_{ao(x_1+x_2)},~\forall x_1,x_2\in X,~\forall
a\in F.$$
\end{lemma}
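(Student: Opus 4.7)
The plan is to derive both implications from Definition 2.4 combined with part 5 of the preceding Lemma, which states that $E_{aox}$ is a singleton whenever $X$ is normal. Under that assumption the ``non-empty intersection'' clauses of the definition collapse to honest equalities of singletons, and this is exactly the content of the displayed formulas.

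For the forward direction I would proceed as follows. Assume $X$ is normal. By part 5 of the previous lemma every $E_{aox}$ consists of a single element, so each of the sets $E_{a_1ox}+E_{a_2ox}$, $E_{(a_1+a_2)ox}$, $E_{aox_1}+E_{aox_2}$ and $E_{ao(x_1+x_2)}$ is also a singleton. The two intersection conditions of Definition 2.4 then force these singletons to coincide pairwise, which, after invoking the convention of the Remark to identify $e_{aox}$ with the unique element of $E_{aox}$, is exactly the pair of displayed equalities.

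For the converse, assume the two equalities hold. Following the Remark, each symbol $e_{aox}$ represents an arbitrary element of $E_{aox}$, so the first equality says that for any $u\in E_{a_1ox}$ and any $v\in E_{a_2ox}$ the sum $u+v$ belongs to $E_{(a_1+a_2)ox}$. Such a sum then witnesses $(E_{a_1ox}+E_{a_2ox})\cap E_{(a_1+a_2)ox}\neq\emptyset$, giving condition 1 of Definition 2.4; applying the same reasoning to the second equality yields condition 2, so $X$ is normal.

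The only delicate point is the appeal to part 5 of the previous lemma in the ``only if'' direction, because without singleton-ness the intersection condition strictly weaker than the equation. Once that step is granted, both implications are routine set-theoretic bookkeeping based on the interpretation of $e_{aox}$ set down in the Remark.
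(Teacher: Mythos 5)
The paper states this lemma without proof, so there is no official argument to compare yours against; judged on its own, your proof is correct and is almost certainly the intended one. The forward direction does hinge on part 5 of the preceding lemma (that normality forces each $E_{aox}$ to be a singleton), which the paper likewise asserts without proof but which is stated before this result, so citing it is legitimate and introduces no circularity in the declared order of results. For the converse, the one point worth making explicit is that each $E_{aox}$ is non-empty --- for $a\neq 0$ this follows from $x\in 1ox=(a^{-1}a)ox=a^{-1}o(aox)$, which produces some $y\in aox$ with $x\in a^{-1}oy$, and for $a=0$ it holds by the convention $e_{0ox}=0$ --- since otherwise the displayed equalities would hold vacuously while the intersection conditions of Definition 2.4 would fail for lack of a witness. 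With that observation added, both directions go through exactly as you describe.
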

Authors in $[5]$ introduced the definition of inner product on a hypervector space with hyperoperations sum and scalar product over the real hyperfield. We restate it for a hypervector space with usual sum and hyperoperation scalar product over the real field.
\begin{definition} $[5]$ Let $X$ be a hypervector space over the real field.
 An inner product on $X$ is a mapping $(.,.):X\times X
\rightarrow \mathbb{R}$ such that for every $a\in \mathbb{R}$ and $x,y,z\in X$ we have

 1-  $  $  $ (x,x)>0$ for $x\neq 0$;

 2-   $  $  $(x,x)=0 \Leftrightarrow x=0$;

 3-   $  $  $ (x+y,z)=(x,z)+(y,z)$;

 4-   $  $  $ (y,x)=(x,y)$;

 5-   $  $  $ \mathrm{sup}( aox,y)=a(x,y)$,

 where $ \mathrm{sup}( aox,y)=\{(z,y): z \in aox \}$.
\end{definition}
By the above definition we have the following lemma:
\begin{lemma} Let $X$ be a hypervector space over the real field. Then $ \mathrm{sup}(aox,y)=(e_{aox},y)$, for every $a\in \mathbb{R}$ and $x,y\in X$.
\end{lemma}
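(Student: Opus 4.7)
The plan is to convert $(e_{aox},y)$ into $a(x,y)$ via a two-step application of property~5 of Definition~2.6, and then to invoke property~5 one more time to identify $a(x,y)$ with $\sup(aox,y)$. Essentially, the only real content will be the observation $a^{-1}oe_{aox}=1ox$ coming from the definition of essential point together with Lemma~2.5(2).

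First I would dispose of the degenerate case $a=0$. Here $e_{aox}=0$ by definition, and $(0,y)=0$ follows from property~3 applied to $0=0+0$. On the other hand property~5 gives $\sup(0ox,y)=0\cdot(x,y)=0$, so both sides vanish and the identity holds trivially.

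For $a\neq 0$, the key move is to apply property~5 with scalar $a^{-1}$ and vector $e_{aox}$, which yields
\[
\sup\bigl(a^{-1}oe_{aox},\,y\bigr)=a^{-1}(e_{aox},y).
\]
By the very definition of essential point we have $x\in a^{-1}oe_{aox}$, and Lemma~2.5(2) (applied with the roles of the scalars swapped, i.e.\ $a\mapsto a^{-1}$, $b\mapsto a$) upgrades this membership to the equality $a^{-1}oe_{aox}=1ox$. Substituting, $\sup(1ox,y)=a^{-1}(e_{aox},y)$. A second use of property~5, this time with scalar $1$, gives $\sup(1ox,y)=(x,y)$. Comparing the two expressions for $\sup(1ox,y)$ produces $(e_{aox},y)=a(x,y)$, and property~5 once more rewrites the right-hand side as $\sup(aox,y)$, completing the argument.

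The step I expect to require the most care is the passage $a^{-1}oe_{aox}=1ox$: it is here that the essential-point condition $x\in a^{-1}oe_{aox}$ combines with Lemma~2.5(2) to turn a set-theoretic membership into an equality of subsets of $X$, and without it the two applications of property~5 would not dovetail. Once that identification is in hand, the proof reduces to scalar homogeneity applied twice and the trivial treatment of $a=0$.
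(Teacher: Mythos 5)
Your proof is correct, and it is genuinely tidier than the paper's. Both arguments rest on the same two ingredients: the set identity $a^{-1}oe_{aox}=(a^{-1}a)ox=1ox$ from Lemma 2.5(2), and an application of property 5 with scalar $a^{-1}$ to the vector $e_{aox}$, which gives $\mathrm{sup}(a^{-1}oe_{aox},y)=a^{-1}(e_{aox},y)$. The difference lies in how $\mathrm{sup}(1ox,y)$ is then exploited. The paper uses only the membership $x\in 1ox$ to obtain the one-sided bound $(x,y)\leq \mathrm{sup}(1ox,y)=a^{-1}(e_{aox},y)$, pairs it with the other one-sided bound $(e_{aox},y)\leq \mathrm{sup}(aox,y)=a(x,y)$ coming from $e_{aox}\in aox$, and is therefore forced into a sign split: for $a>0$ the two inequalities squeeze to equality, while $a<0$ is handled separately by passing to $(-a)o(-x)$. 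You instead evaluate $\mathrm{sup}(1ox,y)$ exactly as $(x,y)$ by a third invocation of property 5 (with scalar $1$), which turns the comparison into the outright equality $(x,y)=a^{-1}(e_{aox},y)$ for every $a\neq 0$ regardless of sign, so the case analysis disappears. Your treatment of $a=0$, deriving $(0,y)=0$ from additivity, is also self-contained, whereas the paper defers to Result 4.5 of $[5]$. One small remark: Lemma 2.5(2) alone already yields the equality $a^{-1}oe_{aox}=1ox$; the membership $x\in a^{-1}oe_{aox}$ from the definition of essential point is not needed as a separate input at that step, so your emphasis on ``upgrading a membership to an equality'' slightly overstates what has to be checked.
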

\begin{proof}
Since $e_{aox}\in aox$, we have
 $$(e_{aox},y)\leq \mathrm{sup}(aox,y)=a(x,y). \leqno {(1)}$$
Since $x \in 1ox$, we have $ (x,y) \in (1ox,y)=(a^{-1}o(aox),y)$ and so
$$(x,y) \leq \mathrm{sup}(a^{-1}o(aox),y)= \mathrm{sup}(a^{-1}oe_{aox},y)=a^{-1}(e_{aox},y). \leqno {(2)}$$
If $a>0$, then $(1)$ and $(2)$ imply that $(e_{aox},y)=a.(x,y)$.
If $a<0$, we have
$$\mathrm{sup}(aox,y)=\mathrm{sup}((-a)o(-x),y)=(e_{(-a)o(-x)},y)=(e_{aox},y).$$
If $a=0$, then assertion follows from Result 4.5 in $[5]$ and $e_{0ox}=0$.
\end{proof}
The defined inner product in Definition 3.1 is a real mapping for real hypervector spaces. The existence of essential points and Lemma 3.2 are the motivation of introducing a new definition of inner product for arbitrary hypervector spaces over arbitrary fields such that in real case is equivalent to Definition 3.1.

Next assume that the field $F$ is a real or complex field.
\begin{definition}
 An inner product on $X$ is a mapping $(.,.):X\times X
\rightarrow F$ such that for every $a\in F$ and $x,y,z\in X$ we have

 1-  $  $  $ (x,x)>0$ for $x\neq 0$;

 2-  $  $  $(x,x)=0 \Leftrightarrow x=0$;

 3-  $  $  $ (x+y,z)=(x,z)+(y,z)$;

 4-  $  $  $ (y,x)=\overline{(x,y)}$;

 5-  $  $  $ (e_{aox},y)=a(x,y)$;

 6-  $  $  $ (u,u) \leq (x,x)$ for every $u \in 1ox$.

 $X$ with an inner product is called a hyperinner product space.
\end{definition}
\begin{lemma}
Let $(.,.)$ be an inner product on $X$. Then the following statements are hold.

1-   $  $  $(0,x)=(x,0)=0$

2-   $  $ $(-x,y)=(x,-y)=-(x,y)$

3-   $  $ $(x,e_{aoy})=\overline{a}(x,y)$

4-   $  $  $ (u,u) \leq a^2(x,x)$ for every $u \in aox$.
\end{lemma}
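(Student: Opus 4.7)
The plan is to deduce the four identities directly from the inner-product axioms (3)--(5) of Definition 3.3, together with elementary facts about essential points from Definition 2.2 and the axioms of Definition 2.1. Items (1)--(3) are one-line calculations; item (4) is the one that requires real work, via a reduction to the case already covered by axiom (6).

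For (1), I apply axiom (5) with $a=0$; the convention $e_{0ox}=0$ gives $(0,y)=0\cdot (x,y)=0$, after which conjugate symmetry yields $(x,0)=\overline{(0,x)}=0$. For (2), I first verify that $-x\in E_{(-1)ox}$ using axioms 4 and 5 of Definition 2.1 (indeed $-x\in -(1ox)=(-1)ox$ and $x\in 1ox=(-1)o(-x)$), so that axiom (5) gives $(-x,y)=(e_{(-1)ox},y)=-(x,y)$; then $(x,-y)=-(x,y)$ follows by conjugate symmetry. For (3), applying conjugate symmetry first and then axiom (5) gives $(x,e_{aoy})=\overline{(e_{aoy},x)}=\overline{a(y,x)}=\overline{a}\,(x,y)$.

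The substantive step is (4), and the main obstacle is arranging the right intermediate point. Given $u\in aox$ with $a\neq 0$, the strategy is to find a single $v\in X$ that simultaneously lies in $1ox$ and has $u$ as an essential point of $aov$; once such a $v$ is in hand, axiom (6) gives $(v,v)\leq(x,x)$ and axiom (5) converts this into the desired bound on $(u,u)$. To produce $v$, I note that axiom 3 of Definition 2.1 gives $a^{-1}o(aox)=1ox$, so any $v\in a^{-1}ou$ automatically lies in $1ox$; and the identity $u\in 1ou=ao(a^{-1}ou)$ guarantees that we may choose $v\in a^{-1}ou$ with $u\in aov$, which makes $u$ an essential point of $aov$. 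Then $(u,u)=(e_{aov},u)=a(v,u)$ by axiom (5), and part (3) gives $(v,u)=\overline{a}\,(v,v)$, so $(u,u)=a\overline{a}(v,v)=|a|^2(v,v)\leq |a|^2(x,x)$. This is the stated bound when $a$ is real (which is the only case where $a^2=|a|^2$); in the complex setting the natural reading of the statement is $a^2=|a|^2$. The degenerate case $a=0$ is handled separately from (1).
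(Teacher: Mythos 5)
Your proof is correct and follows essentially the same route as the paper's: parts (1)--(3) are the identical one-line computations from axioms (4)--(5) and the convention $e_{0ox}=0$, and in part (4) your point $v$ is exactly the paper's $e_{a^{-1}ou}\in 1ox$, to which axiom (6) and then axiom (5) together with part (3) are applied in the same way. Your remark that the bound should read $|a|^2(x,x)$ for complex $a$ (the paper writes $a^2$ without comment) and your separate flagging of the degenerate case $a=0$ are minor refinements, not a different argument.
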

\begin{proof}
1- $(0,x)= (e_{0ox},x)=0(x,x)=0$, $(x,0)=\overline{(0,x)}=0$.

2- $  $$(-x,y)=(e_{1o(-x)},y)=(e_{(-1)ox},y)=-(x,y)$, \\
$(x,-y)=\overline{(-y,x)}=\overline{-(y,x)}=-(x,y)$.

3- $  $$(x,e_{aoy})=\overline{(e_{aoy},x)}=\overline{a(y,x)}=\overline{a}(x,y)$.

4- For every $u \in aox$ we have $e_{a^{-1}ou} \in 1ox$ and so $(e_{a^{-1}ou},e_{a^{-1}ou}) \leq (x,x)$. This by Part 3 implies that $ \frac{1}{a^2}(u,u) \leq (x,x)$.
\end{proof}
\begin{theorem}
 If $X$ is a hyperinner product space, then $X$ is normal.
\end{theorem}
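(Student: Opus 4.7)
The plan is to leverage property~5 of the hyperinner product, namely $(e_{aox}, y) = a(x,y)$, together with the standard uniqueness principle in inner product spaces, in order to force essential points to be unique and then to satisfy the additive identities characterizing normality via Lemma 2.6.

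First I would establish a separation lemma: if $u, v \in X$ satisfy $(u, y) = (v, y)$ for every $y \in X$, then $u = v$. This follows by setting $y = u - v$, expanding $(u - v, u - v)$ using axioms~2, 3, and 4 of Definition~3.3, and invoking positive-definiteness. Next I would deduce that $E_{aox}$ is a singleton for every $a \in F$ and $x \in X$: if $u_1, u_2 \in E_{aox}$ then property~5 gives $(u_i, y) = a(x, y)$ for all $y$ (and for $a = 0$ the set is $\{0\}$ by definition), so the separation lemma forces $u_1 = u_2$. Moreover $E_{aox}$ is nonempty, since for $a\neq 0$ the identity $a^{-1}o(aox) = 1ox \ni x$ exhibits some $w \in aox$ with $x \in a^{-1}ow$, and for $a=0$ we have $0 \in E_{0ox}$ by definition; hence the symbol $e_{aox}$ is unambiguous.

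With uniqueness and existence of essential points in hand, I would verify the two identities of Lemma~2.6 directly. For the first, compute for arbitrary $y \in X$ that
\[
(e_{a_1 o x} + e_{a_2 o x}, y) = a_1(x,y) + a_2(x,y) = (a_1+a_2)(x,y) = (e_{(a_1+a_2)ox}, y),
\]
using property~3 on the left and property~5 throughout; the separation lemma then yields $e_{a_1 o x} + e_{a_2 o x} = e_{(a_1+a_2)ox}$. The degenerate case $a_1+a_2 = 0$ collapses to $0 = 0$ by part~1 of Lemma~3.4. The second identity $e_{aox_1} + e_{aox_2} = e_{ao(x_1+x_2)}$ is entirely analogous: test against an arbitrary $y$, apply property~5 three times, and invoke separation. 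Lemma~2.6 then delivers normality.

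The only potential obstacle is verifying that the right-hand essential points $e_{(a_1+a_2)ox}$ and $e_{ao(x_1+x_2)}$ are well-defined elements rather than empty sets; as noted above this is handled uniformly by the observation that $a^{-1}o(aox) = 1ox$ contains $x$, so the normality axioms reduce to routine bilinear manipulations once separation is available.
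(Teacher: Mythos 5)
Your proposal is correct and follows essentially the same route as the paper: test $e_{a_1ox}+e_{a_2ox}$ (resp. $e_{aox_1}+e_{aox_2}$) against an arbitrary $y$, apply axioms 3 and 5 of Definition 3.3, and conclude equality from positive-definiteness before invoking the characterization of normality. Your explicit separation lemma and the remarks on existence and uniqueness of essential points are just a more careful spelling-out of the step the paper compresses into ``the arbitrariness of $y$ yields the equality.''
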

\begin{proof}
Let $x_1,x_2\in X$ and $a \in F$. By Parts 3 and 5 of Definition 3.3, for every $y\in X$ we have
 \begin{eqnarray*}
(e_{aox_1}+e_{aox_2},y) &=&a(x_1,y)+a(x_2,y)\\
&=& a(x_1+x_2,y)\\
&=& (e_{ao(x_1+x_2)},y)
\end{eqnarray*}
which implies that $(e_{aox_1}+e_{aox_2}-e_{ao(x_1+x_2)} ,y)=0$. The arbitraryness of $y$ yields $e_{aox_1}+e_{aox_2}=e_{ao(x_1+x_2)}$. To prove the correctness of the second condition in Lemma 2.12 is similar. So the proof is complete.
\end{proof}
\begin{lemma}
Let $f:X \rightarrow \mathbb{R}$ be a mapping such that for every $x\in X$, $f(x)=\sqrt{(x,x)}$. Then
the following properties are hold.

1- $  $$f(x)=0 \Leftrightarrow x=0$

2- $  $$\vert(x,y)\vert\leq f(x).f(y)$

3- $  $$f(x+y)\leq f(x)+f(y)$

4- $  $$f(e_{aox})=\vert a \vert f(x)$

5- $  $$\mathrm{sup }f(aox)=\vert a \vert f(x)$.
\end{lemma}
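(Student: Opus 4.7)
Parts (1) and (4) are essentially immediate. For (1), by definition $f(x)=0$ iff $(x,x)=0$, which by axiom 2 of Definition 3.3 is equivalent to $x=0$ (and $f(0)=0$ follows from Lemma 3.4(1)). For (4), axiom 5 of Definition 3.3 together with Lemma 3.4(3) gives
\[
(e_{aox},e_{aox}) \;=\; a\,(x,e_{aox}) \;=\; a\,\overline{a}\,(x,x) \;=\; |a|^{2}\,f(x)^{2},
\]
from which $f(e_{aox})=|a|f(x)$.

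The main work is Part (2), the Cauchy--Schwarz inequality, and I would mimic the classical complex Hilbert space argument. If $y=0$ both sides vanish by Lemma 3.4(1). If $y\neq 0$, axiom 1 yields $(y,y)>0$, so I can form $\lambda=(x,y)/(y,y)\in F$ and expand
\[
0\;\le\;(x-e_{\lambda o y},\,x-e_{\lambda o y}).
\]
Additivity in the first slot, axiom 5 applied to the first slot (giving $(e_{\lambda o y},x)=\lambda\,\overline{(x,y)}$), Lemma 3.4(3) applied to the second slot (giving $(x,e_{\lambda o y})=\overline{\lambda}\,(x,y)$), and Part (4) just established (giving $(e_{\lambda o y},e_{\lambda o y})=|\lambda|^{2}(y,y)$) together reduce the inequality to $(x,x)(y,y)\ge|(x,y)|^{2}$.

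The delicate point here, and the one I expect to be the main obstacle, is that $e_{\lambda o y}$ is in general not unique (Remark 2.3), so the expansion above would be ambiguous in an arbitrary weak hypervector space. The way out is to invoke Theorem 3.5, which tells us every hyperinner product space is normal, followed by Lemma 2.6(5), which says essential points in a normal space are singletons. With uniqueness secured the computation is well defined.

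Parts (3) and (5) then fall out routinely. For (3), expand $f(x+y)^{2}=f(x)^{2}+2\,\mathrm{Re}(x,y)+f(y)^{2}$, bound $\mathrm{Re}(x,y)\le|(x,y)|\le f(x)f(y)$ by Cauchy--Schwarz, and take square roots. For (5), Lemma 3.4(4) gives $f(u)\le|a|f(x)$ for every $u\in aox$, while Part (4) above shows this bound is attained at $u=e_{aox}\in aox$, hence $\sup f(aox)=|a|f(x)$.
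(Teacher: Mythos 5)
Your proposal is correct and follows essentially the same route as the paper: part (2) is the same Cauchy--Schwarz expansion the paper performs on $(y-e_{aox},y-e_{aox})$ with $a=(y,x)/f^2(x)$ (you merely swap the roles of $x$ and $y$), and parts (1), (3), (4), (5) match the paper's arguments step for step. Your precaution about the non-uniqueness of essential points is reasonable but not strictly needed, since axiom 5 and Lemma 3.4(3) hold for every element of $E_{\lambda o y}$, so the expansion is unambiguous for any choice.
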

\begin{proof}1- Obvious.

2- For $x=0$, the relation is hold. For $x\neq0$, let
$a=\frac{(y,x)}{f^2(x)}$. Thus we have
\begin{eqnarray*}
x & \leq & (y-e_{aox},y-e_{aox})=(y,y)-(y,e_{aox})-(e_{aox},y)+(e_{aox},e_{aox}) \\
&=& f^2(y)-a(x,y) \\
&=& f^2(y)- \frac{\mid(x,y)\mid^2}{f^2(x)}
\end{eqnarray*}
which implies the assertion.

3- By 2 we have
\begin{eqnarray*}
f^2(x+y) &=&(x+y,x+y)\\
&=& f^2(x)+f^2(y)+(x,y)+(y,x)\\
& \leq & (f(x)+f(y))^2
\end{eqnarray*}
which implies the assertion.

4- $ $$f^2(e_{aox})=(e_{aox},e_{aox})=a\overline{a}(x,x)=\vert a
\vert^2f^2(x)$.

5- By Part 6 of Definition 3.3 we have $f(u) \leq \vert a \vert f(x)$ for every $u \in aox$. This implies that $\mathrm{sup }f(aox) \leq \vert a \vert f(x)=f(e_{aox})$. Since $f(e_{aox}) \leq \mathrm{sup }f(aox)$, we obtain $\mathrm{sup }f(aox)=\vert a \vert f(x)$.
\end{proof}
\begin{definition}
$[14]$ A norm on $X$ is a mapping $\|.\| :X \longrightarrow
\mathbb{R}$ such that for every $a\in F$ and $x,y\in X$ we have

1- $  $$\|x\|=0 \Leftrightarrow x=0$

2- $  $$\|x+y\|\leq \|x\|+\|y\| $

3- $  $$ \mathrm{sup} \|aox\|=\vert a \vert\|x\| $.

 $X$ with a norm is called a normed hypervector space.
\end{definition}
\begin{corollary}
The defined mapping in Lemma 3.6 is a norm on $X$.
\end{corollary}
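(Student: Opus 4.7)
The plan is simply to verify the three axioms in Definition 3.7 by pointing to the corresponding parts of Lemma 3.6; there is essentially no new work to do. First I would note that $f(x)=\sqrt{(x,x)}$ is well-defined as a nonnegative real number because axioms 1 and 2 of the inner product (Definition 3.3) ensure $(x,x)\ge 0$ for all $x\in X$, with equality iff $x=0$. In particular $f$ really does map $X\to\mathbb{R}$, as required.

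Next I would check the three norm axioms in order. Axiom 1 of Definition 3.7, namely $f(x)=0\Leftrightarrow x=0$, is exactly Part 1 of Lemma 3.6. Axiom 2, the triangle inequality $f(x+y)\le f(x)+f(y)$, is exactly Part 3 of Lemma 3.6 (which in turn rests on the Cauchy--Schwarz estimate proved in Part 2). Axiom 3, $\sup f(aox)=|a|f(x)$, is exactly Part 5 of Lemma 3.6.

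Since each norm axiom is already a line of Lemma 3.6, the corollary follows immediately and there is no genuine obstacle. The only mild subtlety to flag is the one already mentioned at the start: one should confirm that positivity of the inner product makes the square root meaningful, so that the three cited inequalities really are statements about elements of $\mathbb{R}$. Beyond that, the proof is a one-line citation of Parts 1, 3, and 5 of Lemma 3.6.
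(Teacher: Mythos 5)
Your proof is correct and takes essentially the same approach as the paper: one simply matches the three axioms of Definition 3.7 with the corresponding parts of Lemma 3.6. In fact your citation of Parts 1, 3, and 5 is the accurate one --- the paper's proof cites Parts 1, 3, and 4, where Part 4 ($f(e_{aox})=\vert a\vert f(x)$) appears to be a slip for Part 5 ($\mathrm{sup}\, f(aox)=\vert a\vert f(x)$), since axiom 3 of Definition 3.7 concerns the supremum over all of $aox$.
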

\begin{proof}
Part 1, 3 and 4 of Lemma 3.6 together with Definition 3.7 follows the assertion.
\end{proof}
\par \vspace{.4cm}{\bf Acknowledgements.} This research is partially
Supported by the Research Center in Algebraic Hyperstructures and
Fuzzy Mathematics, University of Mazandaran, Babolsar, Iran.
\bibliographystyle{amsplain}

\begin{thebibliography}{10}\large


\bibitem{co}  P. Corsini, \textit{Prolegomena of hypergroup theory}, Aviani editore, (1993).
\bibitem{co}  P. Corsini and V. Leoreanu, \textit {Applications of
Hyperstructure theory}, Kluwer Academic Publishers, Advances in
Mathematics (Dordrecht), (2003).

\bibitem{ma}  F. Marty, \textit{Sur nue generalizeation de la
notion de group}, $8^{th}$ congress of the Scandinavic
Mathematics, Stockholm, (1934), 45-49.

\bibitem{ma}  P. Raja, S. M. Vaezpour, \textit{Normed hypervector spaces}, Iranian Journal of Mathematical Sciences and Informatics, Vol. 2, No. 2 (2007), 35-44.

\bibitem{ma}  S. Roy, T.K. Samanta, \textit{Innerproduct hyperspaces}, Accepted in Italian J. of
Pure and Appl. Math.

\bibitem{ta}  A. Taghavi, R. Hosseinzadeh,
\textit{A note on dimension of weak hypervector spaces}, Accepted in Italian J. of
Pure and Appl. Math.

\bibitem{ta}  A. Taghavi, R. Hosseinzadeh,
\textit{Hahn-Banach Theorem for functionals on hypervector
spaces}, The Journal of Mathematics and Computer Science, Vol .2 No.4 (2011) 682-690.

\bibitem{ta}  A. Taghavi, R. Hosseinzadeh,
\textit{Operators on normed hypervector spaces}, Southeast Asian
Bulletin of Mathematics, (2011) 35: 367-372.

\bibitem{ta}  A. Taghavi, R. Hosseinzadeh,
\textit{Operators on weak hypervector spaces}, Ratio Mathematica, 22 (2012) 37-43

\bibitem{ta}  A. Taghavi, R. Hosseinzadeh,
\textit{Uniform Boundedness Principle for operators on
hypervector spaces}, Iranian Journal of Mathematical Sciences and Informatics,
Vol. 7, No. 2 (2012) 9-16

\bibitem{ta}  A. Taghavi, R. Parvinianzadeh,
\textit{Hyperalgebras and Quotient Hyperalgebras},  Italian J. of
Pure and Appl. Math, No. 26 (2009) 17-24

\bibitem{ta}  A. Taghavi, T. Vougiouklis, R. Hosseinzadeh,
\textit{A note on Operators on Normed Finite Dimensional Weak Hypervector
Spaces }, Scientific bulletin, Series A, Vol. 74, Iss. 4 (2012) 103-108.

\bibitem{ta}  M. Scafati-Tallini,
\textit{Characterization of remarkable Hypervector space}, Proc.
$8^{th}$ congress on "Algebraic Hyperstructures and Aplications",
Samotraki, Greece, (2002), Spanidis Press, Xanthi, (2003),
231-237.


\bibitem{ta}  M. Scafati-Tallini, \textit{Weak Hypervector space and
norms in such spaces}, Algebraic Hyperstructures and Applications
Hadronic Press. (1994), 199--206.


\bibitem{vo1}  T. Vougiouklis, \textit{The fundamental relation
in hyperrings. The general hyperfield. Algebraic hyperstructures
and applications (Xanthi, 1990)}, World Sci. Publishing, Teaneck,
NJ, (1991), 203--211.


\bibitem{vo2}  T. Vougiouklis, \textit{ Hyperstructures and their
representations}, Hadronic Press, (1994).

\bibitem{vo2}  M. M. Zahedi, \textit{ A review on hyper k-algebras}, Iranian Journal of Mathematical Sciences and Informatics, Vol. 1, No. 1 (2006), 55-112

\end{thebibliography}

\end{document}